 \documentclass[draft]{article}

\usepackage{amsmath,amsfonts,amsthm,amssymb,amscd,cancel,color}
\usepackage{enumitem}
\usepackage{verbatim}
\usepackage[dvipdfmx]{graphicx}
\usepackage{ulem,color}

\setlength{\textheight}{8in}
\setlength{\oddsidemargin}{-0.1in}
\setlength{\textwidth}{6in}
\setlength{\parindent}{0.75cm}

\binoppenalty=9999 \relpenalty=9999

\theoremstyle{plain}
\newtheorem{theorem}{Theorem}[section]

\newtheorem{proposition}[theorem]{Proposition}

\theoremstyle{definition}

\theoremstyle{remark}
\newtheorem{remark}[theorem]{Remark}

\newcommand{\R}{{\mathbb R}}

\newcommand{\Z}{{\mathbb Z}}

\def\im{{\rm i}}

\newcommand{\C}{\mathbb{C}}

\def\({\left(}
\def\){\right)}
\def\<{\left\langle}
\def\>{\right\rangle}

\numberwithin{equation}{section}

\setcounter{section}{0}
\begin{document}

\title{A remark on uniform Strichartz estimate for quantum walks on 1D lattice}

\author{Takumi Aso, Masaya Maeda}
\maketitle

\begin{abstract}
In this short note, we study quantum walks (QWs) on one dimensional lattice $\delta \mathbb{Z}$.
Following Hong-Yang \cite{HY19DCDS}, we prove Strichartz estimates for QWs independent of the lattice width $\delta$.
\end{abstract}

\section{Introduction}
Discrete time quantum walks, or quantum walks (QWs) in short, are space-time discrete unitary dynamical systems first appeared as the space-time discretization of $1+1$ dimensional Dirac equation related to Feynman path integral \cite{FH10Book}.
From the end of 20th century, QWs started to attract interest of researchers in many fields of mathematics and physics, see \cite{Konno08LNM,Portugal13Book} and reference therein.
Also, for interesting discussion about the generalization of Quantum walks, see \cite{SS1802.01837, Sako1902.02479}.

The purpose of this note is to combine the the study of QWs as dispersive equation \cite{MSSSSdis,MSSSS18DCDS,MSSSS19JPC} and the study of the continuous limit of QWs, see \cite{MS20RMP} and reference therein.
In particular, we study the Strichartz estimate of QWs in the continuous limit setting and obtain ``uniform" Strichartz estimates, where ``uniform" means that the inequality is independent of lattice size $\delta>0$.

To state our result precisely, we prepare several notations.
First, Pauli matrices are given by
\begin{align}\label{pauli}
\sigma_0:=\begin{pmatrix}1&0\\0&1\end{pmatrix},\  \sigma_1:=\begin{pmatrix}0 &1\\1&0\end{pmatrix},\  \sigma_2:=\begin{pmatrix}0&-\im \\ \im &0\end{pmatrix},\  \sigma_3:=\begin{pmatrix}1 &0\\0&-1\end{pmatrix}.
\end{align} 
The ``shift operator" $\mathcal{S}_\delta$ is given by
\begin{align*}
\(\mathcal{S}_\delta u\)(x):=\begin{pmatrix}
u_1(x-\delta)\\ u_2(x+\delta)
\end{pmatrix}, \ \text{for}\ u=\begin{pmatrix}
u_1\\ u_2
\end{pmatrix},
\end{align*}
and ``coin operator" $\mathcal{C}_\delta $ is given by
\begin{align*}
\(\mathcal{C}_\delta u\)(x) :=
\begin{pmatrix}
\cos (\delta m ) & -\im \sin(\delta  m )\\ -\im \sin(\delta  m ) & \cos (\delta m ) 
\end{pmatrix} 
u(x)  = e^{-\im \delta  m  \sigma_1}u(x).
\end{align*}
where $ m \in \R$ is a constant and $u={}^t\!(u_1\ u_2):\delta \Z\to \C^2$.

Given the shift and coin operators, we define 
\begin{align}\label{def:qwg}
\mathcal{U}_\delta&:=\mathcal{S}_\delta\mathcal{C}_\delta,\quad
U_\delta(t)u:=\mathcal{U}_\delta^{t/\delta}u,\ t\in \delta\Z.
\end{align}

\begin{remark}
	We can consider more general families of coin operators.
	However, for the simplicity of exposition, we chose to consider only the above family of coin operators.
\end{remark}

\begin{remark}
	The definition of $U_\delta(t)$, in particular the reason we take $t\in \delta\Z$ and $U_\delta(t):=\mathcal{U}_\delta^{t/\delta}$ instead of taking $t\in \Z$ and $U_\delta(t):=\mathcal{U}_\delta^{t}$, 
	is inspired by the continuous limit.
	Indeed, formally $\mathcal{S}_\delta \sim 1-\delta \sigma_3 \partial_x$ and $C_\delta \sim 1 -\im \delta  m  \sigma_1$, we see $\mathcal{U}_\delta\sim 1-\delta(\sigma_3 \partial_x + \im  m  \sigma_1)$.
	Thus, we have
	\begin{align*}
	\im \partial_t u(t)\sim \im \delta^{-1}(u(t+\delta)-u(t))=\im\delta^{-1}(\mathcal{U}_\delta u(t)-u(t))\sim -\im \sigma_3 \partial_x u(t)+  m  \sigma_1 u(t),
	\end{align*}
	which is a $1+1$ dimensional (free) Dirac equation up to $O(\delta)$ error.
	
\end{remark}

We further define several notations to state our result.

\begin{itemize}
	\item $\<a\>:=(1+|a|^2)^{1/2}$.
	\item For $u={}^t\!(u_1\ u_2)\in \C^2$, we set $\|u\|_{\C^2}:=\(|u_1|^2+|u_2|^2\)^{1/2}$.
	\item For $p,q\in [1,\infty]$, we set
	\begin{align*}
	l^p_\delta&:=\left\{u:\delta\Z\to \C^2\ |\ \|u\|_{l^p_\delta}:=\(\delta \sum_{x\in \delta \Z} \|u(x)\|_{\C^2}^p\)^{1/p}<\infty\right\},\\
	l^p_\delta l^q_\delta&:=\left\{f:\delta \Z\times \delta\Z \to \C^2\ |\ \|f\|_{l^p_\delta l^q_\delta}:=\(\sum_{t\in \delta \Z} \delta  \(\sum_{x\in \delta \Z}\delta \|f(t,x)\|_{\C^2}^q\)^{p/q}\)^{1/p} \right\},
	\end{align*}
	with the standard modification for the cases $p=\infty$ or $q=\infty$.
	\item We define the Fourier transform $\mathcal{F}_\delta $ and the inverse Fourier transform by
	\begin{align*}
	\mathcal{F}_\delta u = \frac{\delta}{\sqrt{2\pi}}\sum_{x\in \delta \Z}e^{-\im x \xi}u(x),\quad
	\mathcal{F}_\delta^{-1} v   = \frac{1}{\sqrt{2\pi}}\int_{-\pi/\delta}^{\pi/\delta}e^{\im x \xi}v(\xi)\,d\xi.
	\end{align*}
	\item
	Given $p:[-\pi/\delta,\pi/\delta] \to \R$, we define $p(\mathcal{D}_\delta)$ by
	$$
	\(p(\mathcal{D}_\delta)u\)(x):=\mathcal{F}_\delta^{-1} \(p(\xi) \(\mathcal{F}_\delta u\)(\xi)\)(x).
	$$
	\item We say $(p,q)\in [2,\infty]\times [2,\infty]$ is an admissible pair if $3p^{-1}+q^{-1}=2^{-1}$.
	We say $(p,q)\in [2,\infty]\times [2,\infty]$ is a continuous admissible pair if $2p^{-1}+q^{-1}=2^{-1}$.

	\item For $p\in [1,\infty]$, we denote the $p'$ will mean the H\"older conjugate of $p$, i.e.\ $\frac{1}{p}+\frac{1}{p'}=1$.
	\item We write $a\lesssim b$ by meaning $a\leq C b$ where $C>0$ is a constant independent of quantities we are concerning. 
	In particular, in this note, the implicit constant never depends on $\delta$.
	If $a\lesssim b$ and $b\lesssim a$ then we write $a\sim b$.
	
\end{itemize}

We are now in the position to state our main result.

\begin{theorem}\label{thm:main}
	Let $\delta \in (0,1]$ and $u: \delta \Z \to \C^2$, $f:\delta\Z\times \delta \Z \to \C^2$.
	Let $(p,q)$ and $(\tilde{p},\tilde{q})$ be admissible pairs.
	Then, we have
	\begin{align}
	\| U_\delta(t)u\|_{l^p_\delta l^q_\delta} &\lesssim \| |\mathcal{D}_\delta|^{1/p}\<\mathcal{D}_\delta\>^{3/p}u\|_{l^2_\delta},\label{eq:homest}\\
	\| \sum_{s\in [0,t]\cap \delta\Z}U_\delta(t-s) f \|_{l^p_\delta l^q_\delta} & \lesssim \| |\mathcal{D}_\delta|^{1/p+1/\tilde{p}}\<\mathcal{D}_\delta\>^{3/p+3/\tilde{p}}f\|_{l^{\tilde{p}'}_\delta l^{\tilde{q}'}_\delta} \label{eq:inhomest}.
	\end{align}
\end{theorem}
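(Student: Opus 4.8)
The plan is to follow the classical dispersive-to-Strichartz scheme, arranged so that every constant stays independent of $\delta$: diagonalize the symbol of $\mathcal{U}_\delta$, prove a uniform fixed-time dispersive estimate by van der Corput, and assemble \eqref{eq:homest}--\eqref{eq:inhomest} by a $TT^*$/Keel--Tao argument with a one-dimensional Hardy--Littlewood--Sobolev inequality. For the first step, on the Fourier side $\mathcal{S}_\delta$ acts as $\mathrm{diag}(e^{-\im\delta\xi},e^{\im\delta\xi})$ and $\mathcal{C}_\delta$ as $e^{-\im\delta m\sigma_1}$, so $\widehat{\mathcal{U}_\delta}(\xi)$ is unitary with determinant $1$ and trace $2\cos(\delta\xi)\cos(\delta m)$; hence its eigenvalues are $e^{\pm\im\delta\omega_\delta(\xi)}$, where the phase $\omega_\delta$ is defined by
\begin{align*}
\cos(\delta\omega_\delta(\xi))=\cos(\delta\xi)\cos(\delta m),\qquad \delta\omega_\delta(\xi)\in[0,\pi].
\end{align*}
For $\delta m\notin\pi\Z$ the two eigenvalues never collide, so the spectral projections $P_\pm(\xi)$ are smooth and (since they converge to the Dirac-symbol projections with gap $2m$) uniformly bounded in $\xi$ and $\delta$. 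One then has $U_\delta(t)u=\sum_{\pm}\mathcal{F}_\delta^{-1}(e^{\pm\im t\omega_\delta(\xi)}P_\pm(\xi)\mathcal{F}_\delta u)$, and unitarity of $\mathcal{U}_\delta$ on $l^2_\delta$ gives the energy identity $\|U_\delta(t)u\|_{l^2_\delta}=\|u\|_{l^2_\delta}$, which will serve as the $L^2$ endpoint for interpolation.

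After a dyadic localization $Q_j$ to $|\xi|\sim 2^j$, the fixed-time dispersive bound reduces to estimating the scalar kernels
\begin{align*}
K^{\pm}_{\delta,j}(t,x)=\frac{1}{2\pi}\int_{-\pi/\delta}^{\pi/\delta}e^{\im(x\xi\pm t\omega_\delta(\xi))}\,a_{\delta,j}(\xi)\,d\xi
\end{align*}
in $L^\infty_x$, uniformly in $\delta\in(0,1]$. Differentiating the defining relation gives $\omega_\delta'=\cos(\delta m)\sin(\delta\xi)/\sin(\delta\omega_\delta)$, and two further differentiations produce explicit formulas for $\omega_\delta''$ and $\omega_\delta'''$. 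The decisive structural point is that $\omega_\delta'$ vanishes at \emph{both} $\xi=0$ and $\xi=\pm\pi/\delta$, which forces $\omega_\delta''$ to vanish at an interior inflection point of each band; there van der Corput of order three yields the governing rate $|t|^{-1/3}$, whereas on the bulk, where $\omega_\delta''$ is bounded below, one obtains the faster $|t|^{-1/2}$. The crux is to keep all of this uniform in $\delta$: the window $[-\pi/\delta,\pi/\delta]$ and the inflection point escape to infinity as $\delta\to0$, and the lower bounds for $|\omega_\delta''|$ and $|\omega_\delta'''|$ decay at high frequency at precisely the rate ($\omega_\delta''\sim\<\xi\>^{-3}$, matching the continuous symbol $\sqrt{\xi^2+m^2}$) that will generate the weight $\<\mathcal{D}_\delta\>^{3/p}$, while the nondegenerate low-frequency value $\omega_\delta''(0)\sim 1/m$ accounts for the complementary factor $|\mathcal{D}_\delta|^{1/p}$. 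I would record the output as band-localized dispersive bounds $\|U_\delta(t)Q_jf\|_{l^\infty_\delta}\lesssim\beta_j(t)\|f\|_{l^1_\delta}$ with $\delta$-independent constants $\beta_j$.

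Interpolating each band-localized bound against the energy identity controls $\|U_\delta(t)U_\delta(s)^{*}Q_j\|_{l^{q'}_\delta\to l^q_\delta}$, and inserting this into the Keel--Tao bilinear estimate---with the time summation handled by the discrete Hardy--Littlewood--Sobolev inequality, whose exponents are fixed by $3p^{-1}+q^{-1}=2^{-1}$---yields band-localized Strichartz estimates. Summing over $j$ by almost-orthogonality in $l^2_\delta$ converts the accumulated band constants into the Fourier multiplier $|\mathcal{D}_\delta|^{1/p}\<\mathcal{D}_\delta\>^{3/p}$ and gives \eqref{eq:homest}; the inhomogeneous bound \eqref{eq:inhomest} follows from the same bilinear form carrying the two admissible pairs $(p,q)$ and $(\tilde p,\tilde q)$, a Christ--Kiselev argument restricting the summation to $s\in[0,t]\cap\delta\Z$ and producing the combined weight $|\mathcal{D}_\delta|^{1/p+1/\tilde p}\<\mathcal{D}_\delta\>^{3/p+3/\tilde p}$. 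The whole argument rests on the second step: the $\delta$-uniform van der Corput analysis of $\omega_\delta$, together with the correct reading-off of the frequency weights near the band edge, is where essentially all the difficulty lies.
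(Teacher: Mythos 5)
Your proposal follows essentially the same route as the paper (which in turn follows Hong--Yang): diagonalize the symbol to obtain the phase $\omega_\delta(\xi)=\delta^{-1}\arccos(\cos(\delta m)\cos(\delta\xi))$, run a $\delta$-uniform van der Corput analysis on each Littlewood--Paley band distinguishing the degeneracy of $\omega_\delta''$ at $\pm\pi/(2\delta)$ from that of $\omega_\delta'''$ at $0,\pm\pi/\delta$, and assemble the Strichartz bounds by $TT^*$/Keel--Tao with discrete Hardy--Littlewood--Sobolev and summation over bands. The only step you gloss over is how the bulk $|t|^{-1/2}$ bound gets converted into the uniform $\lambda^{1/3}\langle\lambda\rangle|t|^{-1/3}$ form required for a single HLS exponent across all bands: the paper does this by interpolating the $l^1_\delta\to l^\infty_\delta$ and $l^2_\delta\to l^2_\delta$ bounds to $l^{6/5}_\delta\to l^6_\delta$ and then applying Bernstein on the band, which is precisely where the $|\mathcal{D}_\delta|^{1/p}$ weight you correctly predict is generated.
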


We compare our result with the known results by Hong-Yang \cite{HY19DCDS} who proved similar result for discrete Schr\"odinger equations and discrete Klein-Gordon equations.
Here, we briefly recall the results for discrete Schr\"odinger equations.
First, discrete Schr\"odinger equations on $1$D lattice $\delta\Z$ is given by
\begin{align*}
\im \partial_t u(x) = -\Delta_\delta u(x):=\delta^{-2}(2u(x)-u(x-\delta)-u(x+\delta)),\ u:\R\times \delta\Z\to \C.
\end{align*}
For fixed $\delta$, the Strichartz estimate
\begin{align*}
\|e^{\im t \Delta_\delta} u\|_{L^p l^q_\delta}\leq C_\delta \|u_0\|_{l^2_\delta},\ (p,q)\text{ is admissible},
\end{align*}
was proved by Stefanov-Kevrekidis \cite{SK05N} where the  constant $C_\delta=C\delta^{-1/p}$ blows up as $\delta\to 0$.
In general, one expect that solutions of discretized equation converge to the solutions of the original equation.
This is true for Schr\"odinger equation for fixed time interval.
However, as we saw above, the Strichartz estimate for discrete Schr\"odinger equation, which control the global behavior of solutions, do not converge to the Strichartz estimate of the continous Schr\"odinger equation.
This is because of the lattice resonance, which occurs because the dispersive relation for a equation of lattice always have degeneracy.
Indeed, since the dispersive relation for an equation on $\delta \Z$ is a function $p_\delta(\xi)$ on a torus, if $p_\delta$ is smooth, then there exists a point such that $p_\delta''(\xi)=0$.
Also, we can observe the difference between discrete and continous Strichartz estimate from the difference of admissible pair.

Hong-Yang observed that the difference of the scale can be absorbed by fractal difference operator $|\mathcal{D}_\delta|^{1/p}$ and obtained the result:
\begin{align*}
\|e^{\im t \Delta_\delta} u\|_{L^p l^q_\delta}\lesssim \||\mathcal{D}_\delta|^{1/p}u_0\|_{l^2_\delta},\ (p,q)\text{ is admissible}.
\end{align*}
This estimate is compatible with the estimate for the continuous Schr\"odinger equations.
Hong-Yang also prove similar estimates for discrete Klein-Gordon equations.

We now discuss QWs.
In \cite{MSSSS18DCDS}, the Strichartz estimate for QWs was given by
\begin{align}\label{eq:Stz1}
\|U_\delta(t)u\|_{l^p_\delta l^q_\delta} \lesssim C_\delta \|u\|_{l^2_\delta},\quad \ (p,q)\text{ is admissible},
\end{align}
where $C_\delta\to \infty$ as $\delta\to 0$.
Since QWs are space-time discrete Dirac equations, we compare \eqref{eq:Stz1} with the the Strichartz estimates of the spacetime continuous Dirac equations:
\begin{align*}
\|e^{-\im t H}u\|_{L^pL^q}\lesssim \| \<\partial_x \>^{3/p}u\|_{L^2},\quad (p,q)\text{ is continous admissible},,
\end{align*}
where $H=-\im\sigma_3\partial_x +  m  \sigma_1$ (for the proof see \cite{NS11Book} and reference therein).
To make $(p,q)$ to be admissible, using Sobolev inequality we have
\begin{align*}
\|e^{\im t H}u\|_{L^pL^q}\lesssim \| |\partial_x|^{1/p}\<\partial_x \>^{3/p}u\|_{L^2},\quad \ (p,q)\text{ is admissible}.
\end{align*}
We now see that estimate \eqref{eq:homest} is compatible with the above estimate of Dirac equation.
For the inhomogeneous estimate \eqref{eq:inhomest} there is also the similar correspondence.

In the next section, we prove Theorem \ref{thm:main} following Hong-Yang \cite{HY19DCDS}.
There are two difference between the discrete Schr\"odinger equation treated in \cite{HY19DCDS} and QWs.
The first is that discrete Schr\"odinger/Klein-Gordon equations are discretized only in space, while QWs are discretized in spacetime.
The second difference is that QWs have more complicated dispersive relation than discrete Schr\"odinger/Klein-Gordon equations.
For the first problem we follow \cite{MSSSS18DCDS}.
For the 2nd problem, we carefully estimate the dispersive relation for low and high frequency region separately to get the optimal result.

\section{Proof of Theorem \ref{thm:main}}

Let $\phi \in C_0^\infty$ satisfy $\chi_{[-1,1]}(x)\leq \phi(x)\leq \chi_{[-2,2]}(x)$ for all $x\in\R$, where $\chi_A$ is the characteristic function of $A\subset \R$.
For $\lambda>0$, we set $\psi_{\delta,\lambda}\in C^\infty([-\pi/\delta,\pi/\delta])$ by $\psi_{\delta,\lambda}(\xi):=\psi(\xi/\lambda)$ where $\psi(x):=\phi(x)-\phi(2x)$.
We note that since $\mathrm{supp}\psi_{\delta,\lambda} \subset \([-2\lambda,-\lambda/2]\cup [\lambda/2,2\lambda]\)\cap [-\pi/\delta,\pi/\delta]$, we have $\psi_{\delta,\lambda}= 0$ a.e.\ if $\lambda \geq 2\pi/\delta$ and for $\lambda < 2\pi/\delta$, we have
\begin{align*}
\xi \in \mathrm{supp}\psi_{\delta,\lambda}\ \Rightarrow \ |\xi| \sim \lambda. 
\end{align*}
Using this $\psi_{\delta,\lambda}$, we define the Littlewood-Paley projection operators by
\begin{align}\label{eq:def:LW}
P_\lambda:=P_{\delta,\lambda}:=\psi_{\delta,\lambda}(\mathcal{D}_\delta).
\end{align}
We further set $\tilde \psi \in C_0^\infty$ to satisfy $0\not\in \mathrm{supp}\tilde{\psi}$ and $\tilde \psi(\xi)=1$ for $\xi \in \mathrm{supp}\psi$ and set $\tilde \psi_{\delta,\lambda}$ and $\tilde P_N$ as $\tilde{\psi}_{\delta,\lambda}(\xi)=\psi(\xi/\lambda)$ and \eqref{eq:def:LW}.
In particular, we have $P_\lambda = P_\lambda \tilde{P}_\lambda$.

The main ingredient of the proof of Theorem \ref{thm:main} is the following proposition.

\begin{proposition}\label{prop:main}
	Let $\delta\in (0,1]$ and $\lambda\in (0,2\pi/\delta )$.
	Then, we have
	\begin{align}\label{eq:mainest}
	\|U_\delta(t)P_\lambda u\|_{l^\infty_\delta} \lesssim \lambda^{1/3} \<\lambda\>t^{-1/3}\|u\|_{l^1_\delta}.
	\end{align}
\end{proposition}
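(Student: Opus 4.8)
The plan is to turn the $l^1_\delta\to l^\infty_\delta$ bound \eqref{eq:mainest} into a pointwise decay estimate for the convolution kernel of $U_\delta(t)P_\lambda$, and to read off the decay from an oscillatory integral. On the Fourier side $\mathcal{S}_\delta$ and $\mathcal{C}_\delta$ act as multiplication by $e^{-\im\delta\xi\sigma_3}$ and $e^{-\im\delta m\sigma_1}$, so with $A(\xi):=e^{-\im\delta\xi\sigma_3}e^{-\im\delta m\sigma_1}$ and $n:=t/\delta$ we have, for $z\in\delta\Z$,
\begin{align*}
(U_\delta(t)P_\lambda u)(x)=\delta\sum_{y\in\delta\Z}K_{t,\lambda}(x-y)u(y),\qquad K_{t,\lambda}(z)=\frac{1}{2\pi}\int_{-\pi/\delta}^{\pi/\delta}e^{\im z\xi}A(\xi)^{n}\psi_{\delta,\lambda}(\xi)\,d\xi .
\end{align*}
Estimating the $\C^2$-norm by the triangle inequality and recognising $\delta\sum_y\|u(y)\|_{\C^2}=\|u\|_{l^1_\delta}$, it suffices to prove $\sup_z\|K_{t,\lambda}(z)\|_{\mathrm{op}}\lesssim\lambda^{1/3}\<\lambda\>t^{-1/3}$, where $\|\cdot\|_{\mathrm{op}}$ is the operator norm on $\C^2$.

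Next I would diagonalise the symbol. Since $\det A(\xi)=1$ and $\mathrm{tr}\,A(\xi)=2\cos(\delta\xi)\cos(\delta m)$, the unitary $A(\xi)$ has eigenvalues $e^{\mp\im\theta(\xi)}$ with
\begin{align*}
\cos\theta(\xi)=\cos(\delta\xi)\cos(\delta m),
\end{align*}
and, wherever $\sin\theta(\xi)>0$, smooth spectral projections $\Pi_\pm(\xi)$ of unit operator norm; this positivity holds on $\mathrm{supp}\,\psi_{\delta,\lambda}$ once $0<|\cos(\delta m)|<1$. Writing $A(\xi)^n=e^{-\im n\theta(\xi)}\Pi_+(\xi)+e^{\im n\theta(\xi)}\Pi_-(\xi)$ splits $K_{t,\lambda}$ into two scalar oscillatory integrals
\begin{align*}
K_{t,\lambda}(z)=\sum_{\pm}\frac{1}{2\pi}\int_{-\pi/\delta}^{\pi/\delta}e^{\im\varphi_\pm(\xi)}\Pi_\pm(\xi)\psi_{\delta,\lambda}(\xi)\,d\xi,\qquad\varphi_\pm(\xi)=z\xi\mp\tfrac{t}{\delta}\theta(\xi),
\end{align*}
with smooth matrix amplitudes $\Pi_\pm\psi_{\delta,\lambda}$. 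Differentiating $\cos\theta(\xi)=\cos(\delta\xi)\cos(\delta m)$ I would check that $\|\Pi_\pm\psi_{\delta,\lambda}\|_{L^\infty}\lesssim1$ and $\|\partial_\xi(\Pi_\pm\psi_{\delta,\lambda})\|_{L^1}\lesssim1$ uniformly in $\delta,\lambda$; the only delicate point is the factor $1/\sin\theta$ in $\partial_\xi\Pi_\pm$ at low frequency, where however $\sin\theta\sim\delta\sqrt{m^2+\lambda^2}$ compensates the length $\sim\lambda$ of the support.

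The heart of the matter is the phase. For $k\ge2$ one has $\varphi_\pm^{(k)}(\xi)=\mp t\delta^{k-1}g^{(k)}(\delta\xi)$, where $g$ is the rescaled dispersion defined by $\cos g(\eta)=\cos\eta\cos(\delta m)$, and a direct computation gives
\begin{align*}
g''(\eta)=\frac{c(1-c^2)\cos\eta}{(1-c^2\cos^2\eta)^{3/2}},\qquad c:=\cos(\delta m),
\end{align*}
which on $(0,\pi)$ vanishes only at the lattice resonance $\eta=\pi/2$, i.e.\ $\xi=\pi/(2\delta)$, while $g'''(\pi/2)=-c(1-c^2)\neq0$. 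Hence, away from a fixed neighbourhood of $\xi=\pi/(2\delta)$ the quantity $|\varphi_\pm''|$ is bounded from below and van der Corput's lemma with $k=2$ yields a $t^{-1/2}$–type decay, whereas on that neighbourhood I would use $k=3$ with $|\varphi_\pm'''|\gtrsim t\delta^2(1-c^2)\sim t\,m^2\delta^4$, which gives $t^{-1/3}$ and is the origin of the exponent $1/3$. Combined with the amplitude bounds above, this produces in each frequency shell a decay estimate $\|K_{t,\lambda}(z)\|_{\mathrm{op}}\lesssim\delta^{a}\lambda^{b}\,t^{-1/2}$ away from resonance, or $\lesssim\delta^{a'}\,t^{-1/3}$ at resonance.

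Finally I would interpolate each such bound with the trivial one $\|K_{t,\lambda}(z)\|_{\mathrm{op}}\lesssim\lambda$, coming from the measure $\sim\lambda$ of $\mathrm{supp}\,\psi_{\delta,\lambda}$: taking the geometric mean with weight $2/3$ on the decay factor converts every $t^{-1/2}$ into $t^{-1/3}$ and, by the explicit form of the $\delta,\lambda$–powers, cancels all powers of $\delta$, leaving exactly $\lambda^{1/3}\<\lambda\>t^{-1/3}$, with $\<\lambda\>\sim1$ for $\lambda\lesssim1$ and $\<\lambda\>\sim\lambda$ for $\lambda\gtrsim1$. The step I expect to be the main obstacle is precisely this uniform bookkeeping: one must track the $\delta$– and $\lambda$–dependence of the lower bounds for $|g''|$ and $|g'''|$ (which behave differently for $\eta$ near $0$, near $\pi/2$, and of order one) and of the amplitude through the low/high–frequency splitting, and verify that interpolation against the trivial bound cancels every power of $\delta$. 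The degeneracy at $\xi=\pi/(2\delta)$, where $g''=0$ forces the weaker third–derivative estimate, is what ultimately pins the decay rate at $t^{-1/3}$ rather than $t^{-1/2}$.
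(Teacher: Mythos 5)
Your proposal is correct, and its core is the same as the paper's: you write $U_\delta(t)P_\lambda$ as convolution with an oscillatory kernel whose phase is governed by $\theta(\xi)$ with $\cos\theta=\cos(\delta m)\cos(\delta\xi)$ (this is $\delta p_\delta(\xi)$ in the paper's notation), control the matrix amplitude by its $L^\infty$ norm plus the $L^1$ norm of its derivative, and apply van der Corput with $k=2$ where the second derivative of the dispersion is nondegenerate and with $k=3$ near the resonance $\xi=\pm\pi/(2\delta)$, splitting off $\pm\pi/\delta$ where the third derivative vanishes. Your formula for $g''$, the location of its unique zero, and the resulting lower bounds (of size $t\<\lambda\>^{-3}$ for the second derivative at low frequency, and $t\delta^3$, $t\delta^4$ for the second and third derivatives at high frequency) all match \eqref{eq:p''}--\eqref{eq:p'''} and the paper's case analysis $\lambda\le 1/(2\delta)$ versus $\lambda>1/(2\delta)$. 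Where you genuinely diverge is the step converting the $t^{-1/2}$ kernel bounds into the stated $\lambda^{1/3}\<\lambda\> t^{-1/3}$: the paper interpolates the $l^1_\delta\to l^\infty_\delta$ decay with $l^2_\delta$ conservation via Riesz--Thorin to get \eqref{eq:qq'est} and then applies Bernstein at $q=6$ to harvest $\lambda^{1/6}\cdot\lambda^{1/6}=\lambda^{1/3}$, whereas you interpolate pointwise at the kernel level using $\min(a,b)\le a^{1/3}b^{2/3}$ against the trivial bound $a\lesssim\lambda$ coming from the measure of $\mathrm{supp}\,\psi_{\delta,\lambda}$. The arithmetic checks out in every regime: $\lambda^{1/3}\bigl(\<\lambda\>^{3/2}t^{-1/2}\bigr)^{2/3}=\lambda^{1/3}\<\lambda\> t^{-1/3}$ at low frequency, and $\bigl(\delta^{-1}\bigr)^{1/3}\bigl(\delta^{-3/2}t^{-1/2}\bigr)^{2/3}=\delta^{-4/3}t^{-1/3}\sim\lambda^{1/3}\<\lambda\> t^{-1/3}$ in the nonresonant high-frequency region, while the resonant region gives $(\delta^4 t)^{-1/3}$ directly. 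So your route is a valid and more elementary substitute that avoids Riesz--Thorin and Bernstein altogether; what the paper's route buys is the intermediate family of $l^{q'}_\delta\to l^q_\delta$ estimates \eqref{eq:qq'est}. Two minor remarks: your hypothesis $0<|\cos(\delta m)|<1$ (in particular $m\neq 0$) is also implicitly required by the paper, since $p_\delta''\equiv 0$ when $\sin(\delta m)=0$; and your closing heuristic that the resonance at $\pi/(2\delta)$ pins the rate at $t^{-1/3}$ should not obscure that at low frequency the $t^{-1/3}$, together with the essential factor $\lambda^{1/3}$, comes purely from the interpolation step and not from any degeneracy of the phase.
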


\begin{proof}
As in \cite{MSSSS18DCDS}, by Fourier transformation, we have
\begin{align}
U_\delta(t) P_\lambda u_0(x)=\sum_{\mathfrak{s}\in \{\pm\}}\(I_{\delta,\lambda,\mathfrak{s}}*u\)(x),\label{eq:Uconv}
\end{align}
where $I*u(x):=h\sum_{x\in \delta\Z} I(x-y)u(y)$ and
\begin{align}
I_{\delta,\lambda ,\mathfrak{s}}&:=\frac{1}{2\pi}\int_{-\pi/\delta}^{\pi/\delta}e^{\im\(\mathfrak{s}   p_\delta(\xi) t + x\xi\)}Q_{\delta,\mathfrak{s}}(\xi)\psi_{\delta,\lambda }(\xi)\,d\xi,\label{eq:I}\\
p_\delta(\xi)&:=\delta^{-1}\mathrm{arccos}\(\cos(\delta  m )\cos(\delta \xi)\),
\end{align}
where $Q_{\delta,\pm}$ are $2\times 2$ matrices, which can be computed explicitly.

\begin{remark}
	Notice that 
	$
	p_\delta(\xi)=\sqrt{ m ^2+\xi^2}+O(\delta)
	$
	for fixed $\xi$, which tells us that QWs have the dispersion relation similar to Dirac equations (and Klein-Gordon equations) in the continuous limit.
\end{remark}

For $\mathfrak{s}=\pm$, $i,j=1,2$, since $Q_{\delta,\pm}$ depends on $\xi$ only through $\delta\xi$, 
by $\|f(\delta \xi)\|_{L^\infty([-\pi/\delta,\pi/\delta])}=\|f\|_{L^\infty([-\pi,\pi])}$ and $\| \(f(\delta \xi)\)'\|_{L^2([-\pi/\delta,\pi/\delta])}=\|\delta f'(\delta \xi)\|_{L^1([-\pi/\delta,\pi/\delta])}=\|f'\|_{L^1([-\pi,\pi])} $, we see that 
\begin{align}\label{eq:estQ}
\| Q_{\delta,\mathfrak{s},i,j}\|_{L^\infty[-\pi/\delta,\pi/\delta]}+\| Q_{\delta,\mathfrak{s},i,j}'\|_{L^1[-\pi/\delta,\pi/\delta]}\lesssim 1,
\end{align}
where $Q_{\delta,\mathfrak{s},i,j}$ is the $i,j$ component of $Q_{\delta,\mathfrak{s}}$.
Since we are intending to use van der Corput Lemma, we record the derivatives of $p_\delta$:
\begin{align}
p_\delta'(\xi)&=\frac{\cos(\delta  m )\sin(\delta \xi)}{(1-\cos^2(\delta  m )\cos^2(\delta \xi))^{1/2}}, \label{eq:p'}\\
p_\delta''(\xi)&=\frac{\delta\cos(\delta  m )\sin^2(\delta  m )\cos(\delta \xi)}{(1-\cos^2(\delta  m )\cos^2(\delta \xi))^{3/2}}, \label{eq:p''}\\
p_\delta'''(\xi)&=\frac{\delta^2\cos(\delta  m )\sin^2(\delta  m )(1+2\cos^2(\delta  m )\cos^2(\delta \xi))\sin(\delta \xi)}{(1-\cos^2(\delta  m )\cos^2(\delta \xi))^{5/2}}, \label{eq:p'''}.
\end{align}


To prove \eqref{eq:mainest}, we consider two cases $\lambda \leq 1/2\delta$ and $1/2\delta<\lambda$.
First, if $\lambda \leq 1/2\delta$, then by the elementary inequality $\sqrt{1-a^2}\leq \cos a$ for $|a|\leq 1$, we have
\begin{align*}
1-\cos^2(\delta  m )\cos^2(\delta \xi) \leq  \delta^2 m ^2+\delta^2\xi^2\lesssim \delta^2\<\lambda\>^2,
\end{align*}
because $|\delta\xi|\leq 1$ for $ \xi \in \mathrm{supp}\psi_{\delta,\lambda}$ due to the constraint of $\lambda$.
Thus, we have $p_\delta''(\xi)\gtrsim \<\lambda\>^{-3}$ for $\xi \in \mathrm{supp}\psi_{\delta,\lambda }$ and by Young's convolution inequality, van der Corput lemma and the estimate \eqref{eq:estQ} combined with the expression \eqref{eq:Uconv} and \eqref{eq:I}, we have
\begin{align}\label{eq:est:first1}
\|U_\delta(t)P_\lambda u\|_{l_\delta^\infty}\lesssim \<\lambda\>^{3/2}t^{-1/2}\|u\|_{l^1_\delta}.
\end{align}
Combining \eqref{eq:est:first1} with the trivial $l^2$ conservation 
$
\|U_\delta(t)P_\lambda u\|_{l_\delta^2}=\|P_\lambda u\|_{l^2_\delta}\lesssim \|u\|_{l^2_\delta},
$
we have by Riesz-Thorin interpolation (see e.g. \cite{BLBook})
\begin{align}\label{eq:qq'est}
\|U_\delta(t)P_\lambda u\|_{l_\delta^q}\lesssim \<\lambda\>^{3(1/2-1/q)}t^{-(1/2-1/q)}\|u\|_{l_\delta^{q'}},
\end{align}
for $q\geq 2$.
Thus, by Bernstein's inequality  (see Lemma 2.3 of \cite{HY19DCDS}) and \eqref{eq:qq'est}, we have
\begin{align}
\|U_\delta(t)P_\lambda u\|_{l^\infty_\delta}&=\|\tilde{P}_\lambda U_\delta(t)P_\lambda \tilde P_\lambda u\|_{l^\infty_\delta}\lesssim \lambda^{1/6}\|U_\delta(t)P_\lambda \tilde{P}_\lambda  u\|_{l^6_\delta}\lesssim \lambda^{1/6}\<\lambda\> t^{-1/3}\|\tilde{P}_\lambda u\|_{l_\delta^{6/5}} \nonumber
\\&
\lesssim \lambda^{1/3}\<\lambda\> t^{-1/3}\|u\|_{l^1_{\delta}}.\label{eq:rep}
\end{align}
This gives \eqref{eq:mainest} for the first case.

Next, we consider the case $1/2\delta< \lambda <2\pi/\delta$.
In this case, since $\lambda\sim \delta^{-1}$, it suffices to prove \eqref{eq:mainest} with $\lambda$ replaced by $\delta^{-1}$.
Since the $\mathrm{supp} \psi_{\delta,\lambda}$ can contain both $\pm \pi/2\delta$ (where $p_\delta''$ degenerates) and $\pm \pi/\delta$ (where $p_\delta'''$ degenerates), we split the integral \eqref{eq:I} into two regions containing only one of $\pm \pi/\delta$ and $\pm \pi/2\delta$ by smooth cutoff.
In the region which do not contain $\pm \pi/\delta$ (and note that because of the constraint of $\lambda$), it will not contain neither $0$, we have $p_\delta'''(\xi)\gtrsim \delta^4$.
Thus, we have 
\begin{align}\label{eq:est:second1}
\|U_\delta(t)P_\lambda u\|_{l_\delta^\infty}\lesssim \(\delta^4 t\)^{-1/3},
\end{align}
by van der Corput lemma, which gives us \eqref{eq:mainest} in this case.
In the region which contains $\pm \pi/\delta$ but not $\pm\pi/2\delta$, we have $p_\delta''(\xi)\gtrsim \delta^3$ so again by van der Corput lemma, we have
\begin{align*}
\|U_\delta(t)P_\lambda u\|_{l_\delta^\infty}\lesssim \(\delta^3 t\)^{-1/2}.
\end{align*}
Repeating the argument of \eqref{eq:qq'est} and \eqref{eq:rep} replacing $\<\lambda\>$ by $\delta^{-1}$ we also have \eqref{eq:est:second1}.
This finishes the proof.
\end{proof}

\begin{proof}[Proof of Theorem \ref{thm:main}]
	By standard $T T^*$ argument (see e.g. \cite{KT98AJM} for general cases and \cite{MSSSS18DCDS} for discrete setting) we obtain
	\begin{align*}
	\| U_\delta(t) P_\lambda u\|_{l^p_\delta l^q_\delta}\lesssim \<\lambda\>^{3/p} \lambda^{1/p}\|u\|_{l_\delta^2},\quad 
	\| \sum_{s\in [0,t]\cap \delta\Z} U_\delta(t-s)P_\lambda f(s) \|_{l^pl^q} \lesssim \<\lambda\>^{3/p +3/\tilde{p}}\lambda^{1/p+1/\tilde{p}}\|f\|_{l_\delta^{\tilde{p}'}l_\delta^{\tilde{q}'}},
	\end{align*}
	for any admissible pairs $(p,q)$ and $(\tilde{p},\tilde{q})$.
	Here, we note that the implicit constants are independent of $\lambda$ and $\delta$.
	Finally, arguing as in Proof of Theorem 1.3 of \cite{HY19DCDS}, we have the desired estimates.
\end{proof}

\section*{Acknowledgments} 
M.M. was supported by the JSPS KAKENHI Grant Number 19K03579, G19KK0066A, JP17H02851 and JP17H02853.

\medskip

Takumi Aso, Masaya Maeda

Department of Mathematics and Informatics,
Faculty of Science,
Chiba University,
Chiba 263-8522, Japan

{\it E-mail Address}: {\tt maeda@math.s.chiba-u.ac.jp, axca4965@chiba-u.jp}

\end{document}